\newtheorem{thm}{Theorem}
\newcommand{\bsb}{\boldsymbol{b}}
\newcommand{\bsc}{\boldsymbol{c}}
\newcommand{\bsx}{\boldsymbol{x}}
\newcommand{\bsy}{\boldsymbol{y}}
\newcommand{\bss}{\boldsymbol{s}}
\newcommand{\bsA}{\boldsymbol{A}}
\newcommand{\bsepsilon}{\boldsymbol{\epsilon}}
\newcommand{\bsxi}{\boldsymbol{\xi}}
\newcommand{\bszeros}{\boldsymbol{0}}
\newcommand{\bsones}{\boldsymbol{1}}
\newcommand{\calC}{\mathcal{C}}
\newcommand{\calU}{\mathcal{U}}
\newcommand{\bbR}{\mathbb{R}}
\DeclareMathOperator{\LO}{LO}
\DeclareMathOperator{\RLO}{RLO}
\DeclareMathOperator{\IO}{IO}
\DeclareMathOperator{\st}{\text{s.t.}}
\begin{document}
\title{Inverse-Optimization-Based Uncertainty Set for Robust Linear Optimization}
%
%
\author{Ayaka Ueta\inst{1}\and Mirai Tanaka\inst{2,3} \and Ken Kobayashi\inst{1} \and Kazuhide Nakata\inst{1}}
\authorrunning{A. Ueta \textit{et al.}}
%
\institute{School of Engineering, Tokyo Institute of Technology \and
Department of Statistical Inference and Mathematics, The Institute of Statistical Mathematics \email{} \url{} \and
Continuous Optimization Team, RIKEN Center for Advanced Intelligence Project \email{}}
\maketitle              
\begin{abstract}

We consider solving linear optimization (LO) problems with uncertain objective coefficients. For such problems, we often employ robust optimization (RO) approaches by introducing an uncertainty set for the unknown coefficients. Typical RO approaches require observations or prior knowledge of the unknown coefficient to define an appropriate uncertainty set. However, such information may not always be available in practice. In this study, we propose a novel uncertainty set for robust linear optimization (RLO) problems without prior knowledge of the unknown coefficients. Instead, we assume to have data of known constraint parameters and corresponding optimal solutions. Specifically, we derive an explicit form of the uncertainty set as a polytope by applying techniques of inverse optimization (IO). We prove that the RLO problem with the proposed uncertainty set can be equivalently reformulated as an LO problem. Numerical experiments show that the RO approach with the proposed uncertainty set outperforms classical IO in terms of performance stability.

\keywords{Robust optimization  \and Inverse optimization \and Uncertainty set.}
\end{abstract}

\section{Introduction}
The linear optimization (LO) problem considered in this paper is the following: 
\begin{align*}
    \begin{split}
        \min_{\bsx} \hspace{2mm}&\bsc^\top\bsx\\
        \hspace{52mm}\st\hspace{2mm}&\bsA\bsx=\bsb,\hspace{28mm}(\LO(\bsA, \bsb, \bsc))\\
        &\bsx\ge \bszeros,
    \end{split}
\end{align*}
where $\bsA\in\mathbb R^{m\times n}, \bsb\in\mathbb R^m$, and $\bsc\in\mathbb R^n$. 
In this paper, we assume the objective coefficient $\bsc$ is unknown. 
Robust optimization (RO) is a common approach for such optimization problems. 
In robust linear optimization (RLO)~\cite{ellip}, we assume $\bsA, \bsb$ to be known and $\bsc$ to take arbitrary values in a given uncertainty set $\calU\subseteq\mathbb R^n$.
The goal of RLO is to minimize the objective value for the worst-case scenario by solving the following problem:
\begin{align*}
    \begin{split}
        \min_{\bsx} \hspace{2mm}&\max_{\bsc\in\calU}\hspace{2mm}\bsc^\top\bsx\\
        \hspace{50mm}\st\hspace{2mm}&\bsA\bsx=\bsb,\hspace{24mm}(\RLO(\bsA, \bsb, \calU))\\
        &\bsx\ge \bszeros.
    \end{split}
\end{align*}
The above RLO problem gives us a robust optimal solution. 

In RO approaches in practice, the choice of uncertainty set $\mathcal U$ is crucial, and conventional approaches often rely on the availability of observations or prior knowledge of the uncertain coefficient $\bsc$. 
For example, \citet{ellip} introduced ellipsoidal uncertainty sets based on the sample mean vector and the sample covariance matrix of observations of $\bsc$, whose robust counterpart would be equivalently reformulated to a second-order cone optimization problem. 
Also, when we have observations of $\bsc$, we often use a polyhedral uncertainty set $\calU$ defined as a reduced convex hull~\citep{RCH, RCH_} of these observations, whose robust counterpart is an LO problem.
However, in many real-world scenarios, neither observations nor prior knowledge of $\bsc$ is readily available. This makes it difficult to define an appropriate uncertainty set $\mathcal U$ with conventional RO approaches.
Recognizing this gap, we consider constructing $\mathcal U$ with a given set of optimal solutions.

Let us consider transportation problems as a motivating example.  
In practice, the actual transportation costs among facilities depend on not only the distances but also other factors such as road quality and the number of traffic lights \citep{Chen2021}.
Consequently, it is often difficult to observe such transportation costs directly. 
Instead, we might easily access historical data on actual transportation quantities.  
These quantities are determined as optimal solutions to transportation problems with specific demands and supplies, which are observable parameters. 
Motivated by such scenarios, we aim to design an uncertainty set by such observable parameters and optimal solutions. 

In this study, we propose an uncertainty set based on the observed parameters and optimal solutions with the concept of inverse optimization (IO).
We assume that $\bsA$ is known, and that we have set of observed values $\{(\bsb_{k},\bsx_k^*)\}_{k \in \mathcal K}$ where $\bsx_k^*$ is a corresponding optimal solution to $\LO(\bsA, \bsb_{k}, \bsc)$ and $\mathcal K$ is a finite index set. Under this situation, we define an uncertainty set $\calU$ such that each observed optimal solution $\bsx_k^\ast$ lies within the optimal solution set of  $\LO(\bsA, \bsb_{k}, \bsc)$. 
For technical reasons, we assume $\bsc \ge \bszeros$ throughout the following discussion. 
Our main contributions are summarized as follows:
\begin{itemize}
    \item[(i)] 
    We provide an explicit formulation of the IO-based uncertainty set for RLO with uncertain objective coefficients without any prior knowledge of the unknown coefficient itself.

    \item[(ii)]
    We demonstrated that the RLO approach with the proposed uncertainty set outperformed the classical IO approach in terms of performance stability when we had no prior knowledge of the uncertain coefficient.
\end{itemize}

\section{Inverse Optimization (IO)}
\label{sect: IO}
IO is an approach to estimate unknown parameters in an optimization problem using known parameters and optimal solutions~\cite{IO}. 
Precisely, IO estimates $\bsc$ from observed $\bsA$, and $\{(\bsb_{k}, \bsx_{k}^{\ast})\}_{k \in \mathcal K}$ exploiting the optimality conditions of $\LO(\bsA,\bsb_k,\bsc)$. The necessary and sufficient condition for $\bsx_k^*$ to be an optimal solution of $\LO(\bsA,\bsb_k,\bsc)$ is that there exist $\bsy_k\in\mathbb R^m$ and $\bss_k\in\mathbb R^n$ such that
\begin{align*}
    \bsA^\top\bsy_k + \bss_k &= \bsc,\\
    \bss_k&\ge \bszeros,\\
    \bsx_k^{*\top}\bss_k&=0.
\end{align*}
Using these conditions, let
\begin{alignat}{2}
    C_{k} &\coloneqq \{\bsA^{\top} \bsy_{k} + \bss_{k}\colon \bss_{k} \ge \bszeros, \bsx_{k}^{\ast \top} \bss_{k} = 0\},&\quad
    \calC &\coloneqq \bigcap_{k \in \mathcal K} C_{k}.
    \label{eq: def inv feas set}
\end{alignat}
Then, it is necessary that $\bsc \in \calC$, which is so-called the \emph{inverse-feasible set}.
Note that $\calC$ is nonempty when $\bsx_k^*$ precisely satisfies the optimality condition for all $k\in\mathcal K$.
To estimate the unknown coefficient, the classical IO aims to find $\bsc\in\calC$ that is close to a given reference point $\hat\bsc$.
The classical IO is formulated as the following optimization problem:
\begin{align*}
    \hspace{52mm}\begin{split}
    \min_{\bsc}\hspace{2mm}&\|\bsc-\hat\bsc\|\\
    \st\hspace{2mm}&\bsc\in\calC,
    \end{split} \hspace{35mm}(\IO(\hat \bsc, \calC))
\end{align*}
where $\|\cdot\|$ represents an appropriate norm.

\section{IO-based Uncertainty Set}
As we have already seen the unknown coefficient $\bsc\in \calC$, it is natural to set $\calU=\calC$. 
However, since 
$\calC$ is a cone, when we set $\calU=\calC$, the optimal value of the inner maximization problem in RLO, i.e., $\max_{\bsc\in\calC}\bsc^\top\bsx$, is either zero or infinity for any $\bsx$. 

To avoid this, by introducing the $\ell_1$ normalization to $\calC$,
we propose an uncertainty set $\calU=\calC\cap\Delta$, where
$\Delta \coloneqq \{\bsc\in\mathbb R^n \colon \bsones^\top\bsc=1, \bsc\ge\bszeros\}$ is the unit simplex
and $\bsones \in \bbR^n$ is the all-one vector.
By fixing $\|\bsc\|_1 = 1$,
we can avoid that $\max_{\bsc\in\calU}\bsc^\top\bsx$ becomes zero or infinity for any $\bsx$.
The above definition makes $\calU$ a convex polytope and thus has the advantage of being able to leverage the strong duality theorem in the reformulation, as discussed in the next section. Moreover, we can easily prove the following (the proof is omitted).
\begin{thm}
For two sets of observations $\mathcal D_{i} \coloneqq \{(\bsb_k, \bsx_k^\ast)\}_{k \in \mathcal K_i}\ (i = 1, 2)$, let $\mathcal C_i$ be the corresponding inverse feasible set defined in the same manner as in Eq.\ \eqref{eq: def inv feas set} with a common $\bm A$, and $\mathcal U_i=\mathcal C_i\cap \Delta\ (i=1,2)$. 
If $\mathcal D_1\subseteq \mathcal D_2$, then $\calU_1\supseteq\calU_2$.
\end{thm}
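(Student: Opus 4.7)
The plan is to argue that the proposed uncertainty set is monotone decreasing in the observations, purely from the fact that $\mathcal C$ is built as an intersection indexed by the observation set. First I would note that, up to relabeling, the inclusion $\mathcal D_1 \subseteq \mathcal D_2$ means we can view $\mathcal K_1 \subseteq \mathcal K_2$ with the same pairs $(\bsb_k, \bsx_k^\ast)$ attached to each index $k$. Since the fixed matrix $\bsA$ is common to both data sets and the set $C_k$ in Eq.\ \eqref{eq: def inv feas set} depends only on $\bsA$ and the pair $(\bsb_k, \bsx_k^\ast)$ (actually only on $\bsx_k^\ast$, via the complementarity condition), the individual cones $C_k$ for $k \in \mathcal K_1$ coincide across the two data sets.

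Next I would use the basic fact that an intersection over a larger index family is contained in the intersection over a smaller one. Explicitly,
\begin{align*}
\mathcal C_2 \;=\; \bigcap_{k \in \mathcal K_2} C_k \;=\; \Bigl(\bigcap_{k \in \mathcal K_1} C_k\Bigr) \cap \Bigl(\bigcap_{k \in \mathcal K_2 \setminus \mathcal K_1} C_k\Bigr) \;\subseteq\; \bigcap_{k \in \mathcal K_1} C_k \;=\; \mathcal C_1.
\end{align*}
Intersecting both sides with the simplex $\Delta$ preserves inclusion, yielding $\mathcal U_2 = \mathcal C_2 \cap \Delta \subseteq \mathcal C_1 \cap \Delta = \mathcal U_1$, which is the desired conclusion.

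There is essentially no technical obstacle: the argument is a one-line set-theoretic inclusion once one observes that each $C_k$ depends only on $\bsA$ (common by hypothesis) and on the corresponding observation pair, so adding data points only adds constraints in the intersection defining $\mathcal C$. The only mild care needed is in interpreting $\mathcal D_1 \subseteq \mathcal D_2$ as a containment of indexed observation pairs rather than of abstract sets, so that the same $C_k$ appears on both sides for $k \in \mathcal K_1$; this is why the statement explicitly requires a common $\bsA$.
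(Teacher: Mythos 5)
Your proof is correct, and it is precisely the elementary intersection argument the paper has in mind when it states that the result is easily proved and omits the proof: $\mathcal D_1\subseteq\mathcal D_2$ gives $\mathcal C_2=\bigcap_{k\in\mathcal K_2}C_k\subseteq\bigcap_{k\in\mathcal K_1}C_k=\mathcal C_1$, and intersecting with $\Delta$ preserves the inclusion. Your observation that each $C_k$ depends only on the common $\bsA$ and on $\bsx_k^\ast$ is the right (and only) point requiring any care.
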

This implies that the uncertainty set reduces in size as the number of observations increases. Although this is a natural and desirable property for uncertainty sets, it is not necessarily satisfied in existing uncertainty sets. 

\section{LO Reformulation of the Robust Counterpart}
$\RLO(\bsA, \bsb, \calU)$ has a nested structure of minimizing the maximum value, resulting in a two-stage optimization problem. For such RLO problems, a common approach is to reformulate it into a one-stage optimization problem. 
\begin{thm}
    $\RLO(\bsA, \bsb, \calU)$ with uncertainty set $\calU = \calC\cap\Delta$ can be equivalently reformulated as the following LO problem:
    \begin{align}
    \begin{split}
    \min_{\bsx,\{\bsxi_k\},\zeta}\hspace{2mm}&\zeta\\
    \mathrm{s.t.}\hspace{4mm}&\zeta\bsones \ge \sum_{k\in \mathcal K}\bsxi_k+\bsx,\\
    &\bsA\bsxi_k=\bszeros \hspace{15mm}(k\in \mathcal K),\\
    &[\bsxi_k]_j\ge 0 \hspace{15mm}(k\in \mathcal K; j\in \mathcal N_k),\\
    &\bsA\bsx = \bsb,\\
    &\bsx\ge\bszeros,
    \end{split}
    \label{eq:theorem}
    \end{align}
where $\mathcal N_{k} \coloneqq \{j \in \{1, \dots, n\}\colon [\bsx_{k}^{\ast}]_{j} = 0\}$ and $[\bm v]_j$ indicates the $j$-th element of vector $\bm v$.
\end{thm}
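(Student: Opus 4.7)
The plan is to derive the reformulation by dualizing the inner maximization $\max_{\bsc \in \calU} \bsc^\top \bsx$ for fixed feasible $\bsx$ and then fusing the resulting minimization with the outer minimization over $\bsx$.

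To set up the inner LP, I would first unpack $\calU = \calC \cap \Delta$ explicitly. By Eq.~\eqref{eq: def inv feas set}, $\bsc \in \calU$ iff there exist $\{\bsy_k, \bss_k\}_{k \in \mathcal K}$ with $\bsc = \bsA^\top \bsy_k + \bss_k$, $\bss_k \ge \bszeros$, and $\bsx_k^{*\top}\bss_k = 0$ for each $k$, together with $\bsones^\top \bsc = 1$ and $\bsc \ge \bszeros$. Componentwise complementarity between $\bsx_k^* \ge \bszeros$ and $\bss_k \ge \bszeros$ turns $\bsx_k^{*\top}\bss_k = 0$ into the restriction $[\bss_k]_j = 0$ for every $j \notin \mathcal N_k$, while $[\bss_k]_j \ge 0$ remains free on $j \in \mathcal N_k$. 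The inner problem is then a bona fide LP in $(\bsc, \{\bsy_k\}, \{\bss_k\})$. Assigning multipliers $\bsxi_k \in \bbR^n$ to the equalities $\bsA^\top \bsy_k + \bss_k = \bsc$ and a free $\zeta \in \bbR$ to $\bsones^\top \bsc = 1$, forming the Lagrangian, and requiring boundedness as each free primal block is optimized, I would obtain the dual constraints $\bsA \bsxi_k = \bszeros$ (from $\bsy_k$ free), $[\bsxi_k]_j \ge 0$ for $j \in \mathcal N_k$ (from $[\bss_k]_j \ge 0$ on those coordinates), and $\zeta \bsones \ge \sum_{k \in \mathcal K} \bsxi_k + \bsx$ (from $\bsc \ge \bszeros$), with the dual objective collapsing to $\min \zeta$.

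Strong duality then closes the loop: $\calU$ is nonempty (each $\bsx_k^*$ is optimal for $\LO(\bsA, \bsb_k, \bsc)$ under some $\bsc \ge \bszeros$, whose $\ell_1$-normalization lies in $\calU$) and contained in the compact simplex $\Delta$, so the inner LP is bounded and attains its optimum, whence its value equals the dual minimum. Substituting this dual expression for $\max_{\bsc \in \calU} \bsc^\top \bsx$ into the outer problem $\min_\bsx \max_{\bsc \in \calU} \bsc^\top \bsx$ subject to $\bsA\bsx = \bsb$, $\bsx \ge \bszeros$ produces exactly the joint LP~\eqref{eq:theorem}. The step I expect to be most delicate is the sign bookkeeping in the dualization, particularly the asymmetry that $[\bsxi_k]_j$ is sign-constrained only on $j \in \mathcal N_k$ and free elsewhere; this is the most likely place for an error to creep in, but it is more a matter of care than a genuine conceptual obstacle.
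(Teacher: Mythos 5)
Your proposal is correct and follows essentially the same route as the paper: rewrite the inner maximization over $\calU = \calC \cap \Delta$ as an explicit LP in $(\bsc,\{\bsy_k\},\{\bss_k\})$ with the complementarity condition resolved componentwise into $[\bss_k]_j = 0$ off $\mathcal N_k$, dualize it to obtain the $(\{\bsxi_k\},\zeta)$ minimization, invoke LP strong duality, and merge with the outer problem. Your justification of strong duality (primal nonempty and bounded via $\calU \subseteq \Delta$) differs only cosmetically from the paper's (primal and dual both feasible); both suffice.
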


\begin{proof}
     Since $\calU = \calC\cap\Delta$,
     $\max_{\bsc\in\calU} \bsc^\top\bsx$ is equal to  the optimal value of the following LO problem:
\begin{align}
    \begin{split}
    \max_{\bsc,\{\bsy_k\},\{\bss_k\}}\hspace{2mm}&\bsc^\top\bsx\\
    \st\hspace{6.5mm}&\bsA^\top\bsy_k+\bss_k = \bsc \hspace{5mm}(k\in \mathcal K),\\
    &[\bss_k]_j\ge 0 \hspace{15mm}(k\in \mathcal K; j\in \mathcal N_k),\\
    &[\bss_k]_j = 0 \hspace{15mm}(k\in \mathcal K; j\in \mathcal B_k),\\
    &\bsones^\top\bsc = 1,\\
    &\bsc\ge\bszeros,
    \end{split}
    \label{eq:rLO}
\end{align}
where $\mathcal B_{k} \coloneqq \{j \in \{1, \dots, n\}\colon [\bsx_{k}^{\ast}]_{j} > 0\}$. 
The dual problem of the LO problem~\eqref{eq:rLO} is
\begin{align}
    \begin{split}
    \min_{\{\bsxi_k\},\zeta}\hspace{2mm}&\zeta\\
    \st\hspace{3mm}&\zeta\bsones \ge \sum_{k\in \mathcal K}\bsxi_k+\bsx,\\
    &\bsA\bsxi_k=\bszeros \hspace{15mm}(k\in \mathcal K),\\
    &[\bsxi_k]_j\ge 0 \hspace{15mm}(k\in \mathcal K; j\in \mathcal N_k).
    \end{split}
    \label{eq:rLO_}
\end{align}
As mentioned in Section \ref{sect: IO}, $\calU \ne \emptyset$,
that is, the primal problem \eqref{eq:rLO} and its dual problem \eqref{eq:rLO_} are feasible.
Hence from the strong duality theorem, their optimal values are equivalent. 
Therefore, $\RLO(\bsA, \bsb, \calU)$ with $\calU = \calC\cap\Delta$ is equivalent to the LO problem \eqref{eq:theorem}.\qed 
\end{proof}

\section{Numerical Experiments}
We evaluated the performances of our proposed uncertainty set with random instances. 
All optimization problems were solved using Python 3.7.13 and Gurobi 9.5.2 on a MacBook Pro 17.1 with Apple M1 CPUs and 16GB RAM. 

\paragraph{Data Generation}
We set $(m,n) = (10, 150)$ and randomly generated sets of observations $\{(\bsb_k, \bsx_{k}^\ast)\}_{k\in \mathcal K}$ as follows:  
The ground truth objective coefficient $\bsc\in \Delta$ was drawn from the Dirichlet distribution whose parameters were all one. 
We generated each element of $\bsA$ from a uniform distribution on the interval $[-1,1]$. 
Let $K$ be the number of observations.
For each $k\in \mathcal K\coloneqq\{1,\ldots,K\}$, we constructed a feasible solution $\bar \bsx_k$, whose elements were drawn from the uniform distribution on the interval $[0,1]$, and we set $\bsb_k = \bsA\bar \bsx_k$. 
Then, we obtained an optimal solution $\bsx_k^*$ by solving $\LO(\bsA,\bsb_k,\bsc)$. 
To evaluate the out-of-sample performance, we also constructed a validation set of  $\{(\bsb_l,\bsx_l^\ast)\}_{l\in \mathcal L}$ in the same way, where $L$ is the number of validation samples and  $\mathcal L\coloneqq\{K+1,\ldots, K+L\}$.

\paragraph{IO/RO Approaches and Evaluation Metric}
We compared the classical IO approach (Classical IO) and the RO approach with the proposed IO-based uncertainty set (IO-based RO).
In Classical IO, we first obtained $\bsc_{\text{IO}}$ by solving $\IO(\hat \bsc, \calC)$, where the $\ell_2$-norm was employed in the objective function. 
Then, we solved $\LO(\bsA,\bsb_l,\bsc_{\text{IO}})$ and obtained its optimal solution $\hat{\bsx}_l^{\text{IO}}$ for each $l\in \mathcal L$. 
Here, we compared two reference points:
$\hat{\bsc}_1\coloneqq(\boldsymbol{c}+\bsepsilon)/\|\boldsymbol{c}+\bsepsilon\|_1$ and $\hat{\bsc}_{2} \coloneqq (1 / n) \bsones$, where each entry of $\boldsymbol\epsilon$ was drawn from the uniform distribution on $[0, 0.001]$. 
The settings of $\hat{\bsc}= \hat{\bsc}_1$ and $\hat{\bsc}= \hat{\bsc}_2$ respectively simulate the cases where we do and do not have prior knowledge of $\bsc$.
In IO-based RO, 
 we obtained an optimal solution $\hat{\bsx}_l^{\text{RO}}$ to  $\text{RLO}(\bsA, \bsb_l, \calC\cap\Delta)$ by solving the problem~\eqref{eq:theorem} with $\bsb = \bsb_l$ for each $l\in \mathcal L$. 
To evaluate the out-of-sample quality of the obtained solutions $\hat{\bsx}_l\in  \{\hat{\bsx}_l^{\text{RO}}, \hat{\bsx}_l^{\text{IO}}\}$, we used the relative suboptimality gap defined by
\begin{equation}
    \ell(\hat{\bsx}_l, \bsx_l^{\ast})\coloneqq\frac{\bsc^\top\hat{\bsx}_l-\bsc^\top\bsx_l^{\ast}}{\bsc^\top\bsx_l^{\ast}},
\end{equation}
which indicates the relative difference between the objective values of $\hat{\bm x}_l$ with the ground truth objective coefficient and the optimal value. 

\paragraph{Results}
Fig.~\ref{fig:relloss} shows the boxplots of the relative suboptimality gap of the validation data set with $L=20$ for each $K\in \{10, 20, \dots, 130\}$. 
The line plots show the \textit{worst} relative suboptimality gap of each method. The results of Classical IO indicate that its performance was highly dependent on the choice of the reference point $\hat{\bsc}$. 
Comparing Classical IO with $\hat{\bsc}=\hat{\bsc}_2$ and our proposed IO-based RO, both the worst value and the variance of the relative suboptimality gap for IO-based RO were smaller than those of Classical IO, especially for small $K$. 
These results suggest that IO-based RO stably achieves good performance even when we do not have prior knowledge of the unknown coefficient and the number of observations is limited.
We also note that the computation of IO-based RO was finished within 0.1 seconds for all $K$. 

\begin{figure}[ht]
    \centering
    \includegraphics[width=100mm]{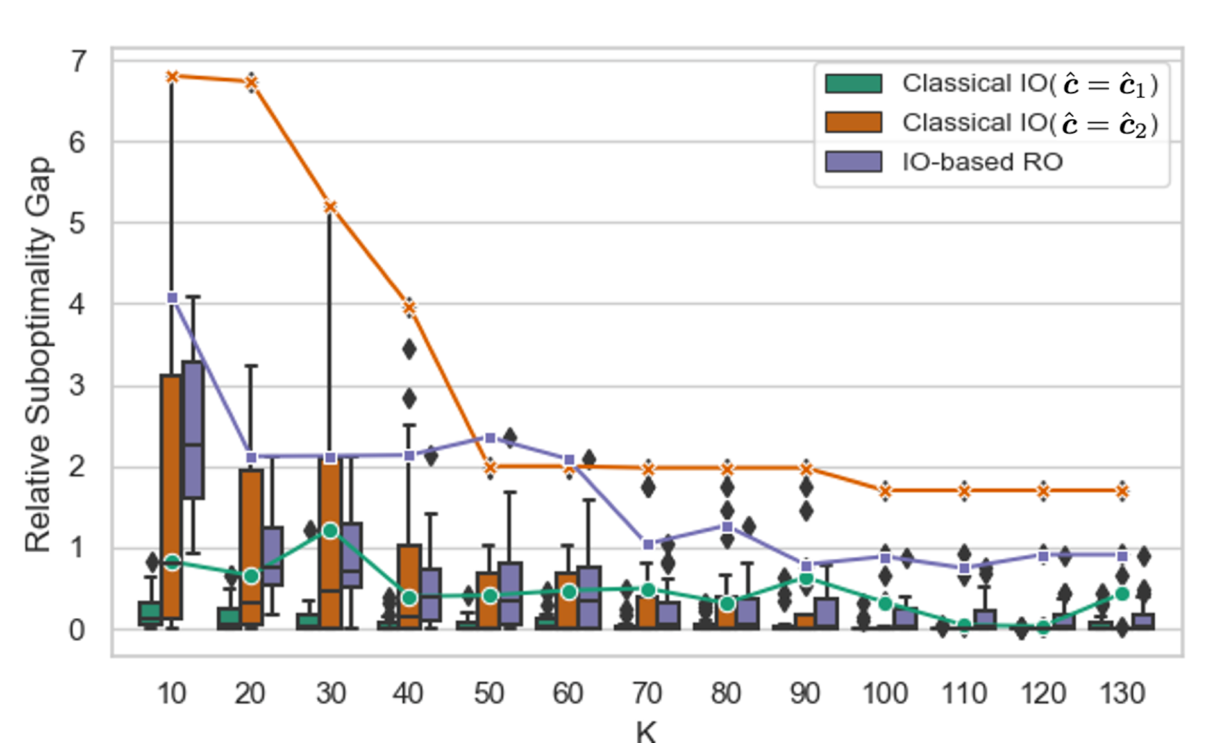}
    \caption{Boxplots of relative suboptimality gap with $L=20$}
    \label{fig:relloss}
\end{figure}

\section{Conclusion and Future Work}
In this paper, we proposed an IO-based uncertainty set for RLO problems with uncertain objective coefficients. We derived an explicit formulation of the uncertainty set without any prior knowledge of the unknown coefficient itself. We showed that the RO approach with our proposed uncertainty set has the advantage of performance stability when an appropriate reference point
is not available.

One of the limitations of this study is that our uncertainty set is derived from the duality theorem of  LO and thus is not available to optimization problems with discrete variables. 
Our future work includes extending this study to deal with mixed-integer optimization problems.

\bibliography{reference}

\begin{thebibliography}{5}
\providecommand{\natexlab}[1]{#1}
\providecommand{\url}[1]{\texttt{#1}}
\expandafter\ifx\csname urlstyle\endcsname\relax
  \providecommand{\doi}[1]{doi: #1}\else
  \providecommand{\doi}{doi: \begingroup \urlstyle{rm}\Url}\fi

\bibitem[Ahuja and Orlin(2001)]{IO}
R.~K. Ahuja and J.~B. Orlin.
\newblock Inverse optimization.
\newblock \emph{Operations Research}, 49:\penalty0 771--783, 2001.

\bibitem[Ben-Tal and Nemirovski(1999)]{ellip}
A.~Ben-Tal and A.~Nemirovski.
\newblock Robust solutions of uncertain linear programs.
\newblock \emph{Operations Research Letters}, 25\penalty0 (1):\penalty0 1--13,
  1999.

\bibitem[Bennett and Bredensteiner(2000)]{RCH}
K.~P. Bennett and E.~J. Bredensteiner.
\newblock Duality and geometry in {SVM} classifiers.
\newblock In \emph{Proceedings of the 17th International Conference on Machine
  Learning}, pages 57--64, 2000.

\bibitem[Chen et~al.(2021)Chen, Chen, and Langevin]{Chen2021}
L.~Chen, Y.~Chen, and A.~Langevin.
\newblock An inverse optimization approach for a capacitated vehicle routing
  problem.
\newblock \emph{European Journal of Operational Research}, 295\penalty0
  (3):\penalty0 1087--1098, 2021.

\bibitem[Crisp and Burges(1999)]{RCH_}
D.~J. Crisp and C.~J.~C. Burges.
\newblock A geometric interpretation of $\nu$-{SVM} classifiers.
\newblock In \emph{Advances in Neural Information Processing Systems 12}, pages
  244--250, 1999.

\end{thebibliography}
\end{document}